\documentclass[11pt, english]{amsart}

\usepackage{amsmath,amssymb,enumerate}

\usepackage[T1]{fontenc}
\usepackage[all]{xy}

\usepackage{babel}
\usepackage{amstext}
\usepackage{amsmath}
\usepackage{amsfonts}
\usepackage{latexsym}
\usepackage{ifthen}

\usepackage{xypic}
\xyoption{all}
\pagestyle{plain}

\newcommand{\rk}{{\rm rk}}

\newcommand{\sK}{{\mathcal K}}

\newtheorem{lemma1}{}[section]

\newenvironment{lemma}{\begin{lemma1}{\bf Lemma.}}{\end{lemma1}}

\newenvironment{theorem}{\begin{lemma1}{\bf Theorem.}}{\end{lemma1}}

\newenvironment{corollary}{\begin{lemma1}{\bf Corollary.}}{\end{lemma1}}

\newenvironment{assumption}{\begin{lemma1}{\bf Assumption.}}{\end{lemma1}}

\newenvironment{remark*}{{\bf Remark.}}{}
\newenvironment{example*}{{\bf Example.}}{}
\newenvironment{assumption*}{{\bf Assumption.}}{}

\newcommand{\C}{\ensuremath{\mathbb{C}}}
\newcommand{\N}{\ensuremath{\mathbb{N}}}
\newcommand{\PP}{\ensuremath{\mathbb{P}}}

\newcommand{\holom}[3]{\ensuremath{#1:#2  \rightarrow #3}}
\newcommand{\fibre}[2]{\ensuremath{#1^{-1} (#2)}}

\makeatletter
\ifnum\@ptsize=0 \addtolength{\hoffset}{-0.3cm} \fi \ifnum\@ptsize=2 \addtolength{\hoffset}{0.5cm} \fi \sloppy



\newcommand\sO{{\mathcal O}}

\DeclareMathOperator*{\sing}{sing}

\DeclareMathOperator*{\nons}{nons}

\setcounter{tocdepth}{1}

 \setlength{\parindent}{0pt}
\setlength{\parskip}{\smallskipamount}

\DeclareMathOperator*{\Plog}{\Omega_{\PP^n}(\log D)}
\DeclareMathOperator*{\Plogone}{\Omega_{\PP^n}(\log D) \otimes \sO_{\PP^n}(1)}

\title{Totally invariant divisors of endomorphisms of projective spaces} 
\date{\today}

\subjclass[2010]{14J70, 14F10, 14J40}
\keywords{endomorphism, totally invariant divisor}

\author{Andreas H\"oring}

\address{Andreas H\"oring, Laboratoire de Math{\'e}matiques J.A. Dieudonn{\'e},
UMR 7351 CNRS, Universit{\'e} de Nice Sophia-Antipolis, 06108 Nice Cedex 02, France        
}
\email{hoering@unice.fr}

\begin{document}

\begin{abstract}
Totally invariant divisors of endomorphisms
of the projective space are expected to be always unions of linear spaces. 
Using logarithmic differentials we establish a lower bound for the degree of the non-normal locus of a totally invariant divisor. As a consequence we prove the linearity of totally invariant divisors for $\PP^3$.
\end{abstract}

\maketitle

\vspace{-0.2cm}
\section{Introduction}

An endomorphism of a complex projective variety $X$ is a finite morphism $\holom{f}{X}{X}$
of degree at least two. A totally invariant subset of $f$ is a subvariety $D \subset X$
such that we have a set-theoretic equality $\fibre{f}{D}=D$.
The projective space $X=\PP^n$ admits many endomorphisms (simply take $n+1$ homogeneous
polynomials of degree $m$ without a common zero), and it is an interesting problem to understand
their dynamics \cite{FS94}. A well-known conjecture claims that totally invariant subvarieties
of endomorphisms $\holom{f}{\PP^n}{\PP^n}$ are always linear 
subspaces. This conjecture\footnote{This statement is claimed in \cite{BCS04}, but the proof has a gap.} is known for divisors of degree $n+1$ \cite[Thm.2.1]{HN11} and smooth hypersurfaces of any degree.
In fact, by results of Beauville \cite[Thm.]{Bea01}, Cerveau-Lins Neto \cite{CL00}
and Paranjape-Srinivas \cite[Prop.8]{PS89} a smooth hypersurface $D$ of degree at least two does not admit an endomorphism, {\em in particular} it is not a totally invariant subset of $\holom{f}{\PP^n}{\PP^n}$. However there are examples of singular normal hypersurfaces $D \subset \PP^n$ of degree $n$ that admit an endomorphism
$\holom{g}{D}{D}$ \cite[Ex.1.9]{Zha14}. One should thus ask if $g$ is induced by an endomorphism
of the projective space. The main result of this paper is a negative answer to this question:

\begin{theorem} \label{theoremmain}
Let $\holom{f}{\PP^n}{\PP^n}$ be an endomorphism of degree at least two, and let $D \subset \PP^n$ be a prime divisor
of degree $d \geq 2$ that is totally invariant. Denote by $Z \subset D$ the non-normal locus of $D$. Then we have
\begin{equation} \label{theestimate}
\deg(Z) > (d-1)^2 - \frac{n(n-1)}{2}.
\end{equation}
In particular if $d \geq 1 + \sqrt{\frac{n(n-1)}{2}}$, then $D$ is not normal.
\end{theorem}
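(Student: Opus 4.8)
The plan is to extract from total invariance an ``endomorphism'' of the logarithmic cotangent sheaf $\Omega_{\PP^n}(\log D)$ and to read off \eqref{theestimate} from its second Chern class. Write $f=[f_0:\cdots:f_n]$ with $\deg f_i=m\ge2$ and $D=\{F=0\}$, $\deg F=d$. The first step is the standard maximal ramification fact: $f^{-1}(D)=D$ forces $f^*D=mD$, i.e. $F\circ f=c\,F^m$. Differentiating this shows that pullback of logarithmic forms preserves the poles along $D$, so the cotangent map of $f$ induces
\[
\phi: f^*\Omega_{\PP^n}(\log D)\lra \Omega_{\PP^n}(\log D),
\]
an isomorphism away from the residual ramification $R':=R-(m-1)D$, where $R$ is the ramification divisor. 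Since $K_{\PP^n}=f^*K_{\PP^n}+R$ gives $\deg R=(m-1)(n+1)$, one finds $\deg R'=(m-1)(n+1-d)$; effectivity of $R'$ already recovers $d\le n+1$.

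The second step is a Chern class computation. As $\phi$ is injective with cokernel $\mathcal C$ supported in codimension one on $R'$, I would apply the Whitney formula to $0\to f^*\Omega_{\PP^n}(\log D)\xrightarrow{\phi}\Omega_{\PP^n}(\log D)\to\mathcal C\to0$, using that on $\PP^n$ one has $f^*h=mh$ and hence $f^*c_k=m^kc_k$ (here $h$ is the hyperplane class). Writing $\gamma:=c_2(\Omega_{\PP^n}(\log D))\cdot h^{n-2}$, the degree-two part becomes a single linear equation $\gamma(1-m^2)=-(m-1)(n+1-d)^2+\kappa$, whose solution is
\[
\gamma=\frac{(n+1-d)^2}{m+1}-\frac{\kappa}{m^2-1},
\]
where $\kappa\ge0$ is the codimension-two contribution of $\mathcal C$. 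Since $d\ge2$ gives $n+1-d\le n-1$ and $m+1\ge3$, this yields $\gamma\le \frac{(n+1-d)^2}{m+1}<(n-1)(n+1-d)$.

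The third step translates this into a statement about $Z$. For a smooth $D$ the residue sequence gives $c_2(\Omega_{\PP^n}(\log D))\cdot h^{n-2}=d^2-(n+1)d+\binom{n+1}{2}=:c_2^{\mathrm{sm}}$, and one checks the algebraic identity
\[
c_2^{\mathrm{sm}}-(n-1)(n+1-d)=(d-1)^2-\binom{n}{2}.
\]
For singular $D$ the residue sequence relates the drop $c_2^{\mathrm{sm}}-\gamma$ to the non-normal locus: through the normalization $\nu:\hat D\to D$ the discrepancy is governed by $\nu_*\mathcal O_{\hat D}/\mathcal O_D$ along $Z$, and the non-locally-free behaviour of the reflexive sheaf $\Omega_{\PP^n}(\log D)$ along the codimension-two locus $Z$ should give $\deg Z\ge c_2^{\mathrm{sm}}-\gamma$. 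Combined with the bound $\gamma<(n-1)(n+1-d)$ this is exactly \eqref{theestimate}, and the ``in particular'' clause follows from $\deg Z\ge0$.

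The hard part will be this last step: making precise the comparison between $c_2$ of the reflexive logarithmic sheaf and $\deg Z$ for arbitrary, in particular unibranch non--normal-crossing, transverse singularities. The clean equality holds for transverse nodes, where the local drop in $c_2$ equals the local degree of $Z$; for worse singularities the naive Milnor-number count runs in the wrong direction, so one must exploit that in dimension $\ge3$ the sheaf $\Omega_{\PP^n}(\log D)$ is only reflexive, its jumping along $Z$ correcting the count favourably. Justifying $\kappa\ge0$ in the second step, and treating the boundary case $d=n+1$ separately, are secondary points that still require care.
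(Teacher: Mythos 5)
Your overall strategy --- compare $c_2$ of $f^*\Omega_{\PP^n}(\log D)$ and $\Omega_{\PP^n}(\log D)$ and convert the discrepancy into a bound on $\deg Z$ --- matches the paper's, but both places where you defer the work are genuine gaps, and they are exactly where the paper's content lies. First, your step 2 is circular as stated: the Whitney formula applied to $0 \to f^*\Omega_{\PP^n}(\log D) \to \Omega_{\PP^n}(\log D) \to \mathcal C \to 0$ is an identity, so it merely \emph{defines} $\kappa$; without an independent proof that $\kappa \ge 0$ it yields nothing. And $\kappa \ge 0$ is not a general fact: for a torsion sheaf supported on a curve $C$ in a surface, say $\mathcal C = i_*L$ with $L$ a line bundle on $C$, one has $c_2(i_*L) = C^2 - \deg L$, which can be arbitrarily negative; since your $\mathcal C$ is a quotient of $\Omega_{\PP^n}(\log D)$, whose $c_1 = (d-n-1)h$ is negative for $d \le n$, no positivity comes for free. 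The paper replaces this step by a different mechanism: a degeneracy-locus lemma on the general surface $S = \fibre{f}{P}$ (Lemma \ref{lemmasurfaceinequality}) giving $c_2(E_1) \le c_2(E_2)$ for an injective $\varphi: E_1 \to E_2$, \emph{provided} $E_1$ has many global sections generating it off a finite set with rank drop at most one --- supplied by the explicit sections $d(X_i\cdot f)/f$ of $\Plogone$ (Lemma \ref{lemmaglobalgeneration}) --- and \emph{provided} the degeneracy locus of $\varphi$ is disjoint from that finite set, which needs the dynamical input of \cite[Cor.3.3]{a20}: after replacing $f$ by an iterate, every component of $Z$ is totally invariant and not contained in the logarithmic ramification divisor $R$. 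Nothing in your proposal substitutes for these inputs. (There is also a factor-$m$ slip: the degree-two cross term is $m(m-1)(n+1-d)^2$, not $(m-1)(n+1-d)^2$, though this does not affect the asymptotic shape.)

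Second, the step you yourself flag as ``the hard part'' --- relating $c_2^{\mathrm{sm}} - \gamma$ to $\deg Z$ for arbitrary codimension-two singularities --- cannot be carried out in that generality, and the paper does not attempt it: it first invokes \cite[Cor.3.3]{a20} (total invariance forces $(\PP^n,D)$ to be log canonical) to conclude that $D$ has at most normal crossing singularities outside a set of codimension three. Only then do the Saito and Dolgachev logarithmic sheaves coincide, the residue sequence $0 \to \Omega_{\PP^n} \to \Omega_{\PP^n}(\log D) \to \nu_*\sO_{\tilde D} \to 0$ holds with the normalization, and one obtains the \emph{exact} identity $c_2(\Plogone) = (d-1)^2H^2 - [Z]$ of \eqref{twistone}, rather than an estimate whose sign, as you note, goes the wrong way for worse singularities. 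Your proposal never uses log canonicity, so the codimension-two structure of $D$ that makes the count correct is missing. Finally, the strictness in \eqref{theestimate} is obtained in the paper from the $m^{n-1}$ term with coefficient $-(n-1)(n+1-d) < 0$, using $d \le n$ (the case $d = n+1$ being excluded by \cite[Thm.2.1]{HN11}); your strictness via $m+1 \ge 3$ would only become available after steps 2 and 3 are repaired.
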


Note that if $d=n$, then the inequality \eqref{theestimate} simplifies to
$$
\deg(Z) > \frac{1}{2} (n-2) (n-1).
$$
However, by a well-known result about singularities of irreducible plane curves \cite[3.8]{Fis01}, one has 
$\deg(Z) \leq \frac{1}{2} (n-1)(n-2)$. Thus an {\em irreducible} divisor $D$ of degree $n$ is not totally invariant.
This observation significantly improves \cite[Thm1.1]{Zha13}, combined with \cite[Thm.1.5(5) (arXiv version)]{NZ10} we obtain:

\begin{corollary} \label{corollarythreespace}
Let $\holom{f}{\PP^3}{\PP^3}$ be an endomorphism, and let $D \subset \PP^n$ be a prime divisor
that is totally invariant. Then $D$ is a hyperplane.
\end{corollary}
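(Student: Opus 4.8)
The plan is to show that $d := \deg D = 1$. Since $D \subset \PP^3$ is prime it is an irreducible surface, so it suffices to exclude $d \ge 2$. The engine is Theorem~\ref{theoremmain} specialized to $n = 3$, where the estimate \eqref{theestimate} reads $\deg Z > (d-1)^2 - 3$; I would pair this lower bound on the non-normal locus with an upper bound coming from a general plane section.

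First I would establish, for every prime totally invariant $D$, the bound
$$\deg Z \;\le\; \binom{d-1}{2} \;=\; \frac{(d-1)(d-2)}{2}.$$
If $D$ is normal this is vacuous, since $Z = \emptyset$ and $\deg Z = 0$. If $D$ is not normal, then $D$ being a hypersurface is automatically $S_2$, so its non-normal locus coincides with the non-$R_1$ locus and is a pure one-dimensional curve; in particular $\deg Z$ is a well-defined positive integer. For a general plane $H \subset \PP^3$, Bertini's theorem guarantees that $C := D \cap H$ is an irreducible reduced plane curve of degree $d$, and transversality of $H$ to $Z$ forces $C$ to be singular at each of the $\deg Z$ points of $H \cap Z$. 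The classical genus estimate for irreducible plane curves \cite[3.8]{Fis01} then yields $\deg Z \le \#\,\mathrm{Sing}(C) \le \binom{d-1}{2}$, which is exactly the asserted inequality.

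Feeding this into Theorem~\ref{theoremmain} gives $(d-1)^2 - 3 < \tfrac{1}{2}(d-1)(d-2)$, i.e. $d(d-1) < 6$, which already fails for $d \ge 3$. This rules out every prime totally invariant divisor of degree at least $3$ (the degree-$4$ case being in any event covered by \cite[Thm.2.1]{HN11}), and leaves only the quadric $d = 2$. The hard part is precisely this residual case: for $d = 2$ the estimate of Theorem~\ref{theoremmain} degenerates to the vacuous $\deg Z > -2$, and since every prime quadric surface (smooth, or a rank-$3$ cone with its isolated vertex) is in fact normal, the plane-curve count carries no information either. To eliminate a totally invariant quadric I would invoke the classification \cite[Thm.1.5(5) (arXiv version)]{NZ10}, which handles exactly this normal low-degree situation. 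Combining the excluded range $d \ge 3$ with the exclusion of $d = 2$ forces $d = 1$, so $D$ is a hyperplane.
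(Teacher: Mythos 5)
Your proposal is correct and follows essentially the same route as the paper: the paper also combines the estimate of Theorem \ref{theoremmain} with the bound $\deg Z \leq \frac{1}{2}(d-1)(d-2)$ on singular points of a general (irreducible, by Bertini) plane section coming from \cite[3.8]{Fis01}, excludes $d=4$ via \cite[Thm.2.1]{HN11}, and disposes of the residual normal quadric case via \cite[Thm.1.5(5) (arXiv version)]{NZ10}. Your only (harmless) variation is running the Fischer-type count for all $d$ at once to get $d(d-1)<6$, where the paper states it just for $d=n$.
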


\begin{center}
{\bf Notation and terminology.} 
\end{center}

We work over the complex field $\C$. Let $\holom{f}{\PP^n}{\PP^n}$ be an endomorphism, and let $D \subset \PP^n$ be a totally
invariant prime divisor. Then (e.g. by \cite[Lemma 2.5]{a20}) there exists a unique effective divisor $R$ 
such that the logarithmic ramification formula
$$
K_{\PP^n} + D = f^* (K_{\PP^n} + D) + R
$$
holds, and we call $R$ the logarithmic ramification divisor. Since $\rho(\PP^n)=1$ one easily deduces that $d := \deg D \leq \deg(-K_{\PP^n}) = n+1$.

Given a locally free sheaf $E \rightarrow X$ over some manifold $X$ and $x \in X$ a point,
we denote by $E_x$ the $\C$-vector space $E \otimes \sO_X/m_x$ where $m_x \subset \sO_X$ is the ideal
sheaf of $x$. If $\alpha: E_1 \rightarrow E_2$ is a morphism of sheaves between locally free sheaves $E_1$ and $E_2$,
we denote by $\alpha_x: E_{1,x} \rightarrow E_{2,x}$ the linear map induced between the vector spaces.

\section{The sheaf of logarithmic differentials} \label{sectionlogdiff}
We consider the complex projective space $\PP^n$ of dimension $n \geq 2$.

\begin{assumption} \label{assume}
In this whole section we denote by $D \subset \PP^n$ a prime divisor of degree $d \geq 2$. We suppose that  there exists a subset $W \subset \PP^n$ of codimension at least three such that
$D \setminus W$ has at most normal crossing singularities. 
\end{assumption}

\subsection{Definition and Chern classes}
Since $D$ has normal crossing singularities in codimension two, 
the sheaf of logarithmic differentials
in the sense of Saito \cite{Sai80}  
and the sheaf of logarithmic differentials in the sense of Dolgachev \cite[Defn.2.1]{Dol07} coincide by
\cite[Cor.2.2]{Dol07}, we will denote this sheaf by $\Omega_{\PP^n}(\log D)$. The sheaf $\Omega_{\PP^n}(\log D)$
is reflexive (it is defined as a dual sheaf \cite[p.36, line -4]{Dol07}) and locally free in the points where $D$
has normal crossing singularities.
By \cite[(2.8)]{Dol07} 
there exists a residue exact sequence
\begin{equation} \label{definelog}
0 \rightarrow \Omega_{\PP^n} \rightarrow \Omega_{\PP^n}(\log D) \rightarrow \nu_*(\sO_{\tilde D}) \rightarrow 0,
\end{equation}
where $\holom{\nu}{\tilde D}{D}$ is the normalisation\footnote{The statement in \cite[(2.8)]{Dol07}
is for a desingularisation, but since $\pi_*(\sO_{D''}) = \sO_{D'}$ for any birational morphism $\holom{\pi}{D''}{D'}$
between normal varieties, the statement holds for the normalisation.}.  

Our goal is to compute the first and second Chern class of the sheaf $\Omega_{\PP^n}(\log D)$.
Recall first that
\begin{equation} \label{cherndivisor}
c_1(\sO_{D}) = D, \ c_2(\sO_{D})= D^2.
\end{equation}
Denote by $Z \subset D$ the non-normal locus of $D$. 
Since $D$ is Cohen-Macaulay, we know by Serre's criterion that $Z \subset \PP^n$ is empty or a projective set of pure dimension $n-2$.
We have an exact sequence
\begin{equation} \label{normalisationsequence}
0 \rightarrow \sO_D \rightarrow \nu_* \sO_{\tilde D} \rightarrow \sK \rightarrow 0,
\end{equation}
where $\sK$ is a sheaf with support on $Z$.
Since $D$ has normal crossings on $D \setminus W$ the restriction of
\eqref{normalisationsequence} to $D \setminus W$ is
\begin{equation} \label{normalisationsequence2}
0 \rightarrow \sO_{D \setminus W} \rightarrow \nu_*(\sO_{\tilde D}) \otimes \sO_{D \setminus W} 
\rightarrow \sO_{Z \cap (D \setminus W)} \rightarrow 0.
\end{equation}
Since $Z$ is empty or of pure dimension $n-2$ and $W$ has codimension at least three in $\PP^n$,
we see that $W$ does not contain any irreducible component of $Z$.
The second Chern class $c_2(\nu_*(\sO_{\tilde D}))$ is determined by intersecting with the class of 
a general linear $2$-dimensional subspace $P \subset \PP^n$. Since $P$ is disjoint from $W$,  
the sequence \eqref{normalisationsequence2} combined with \eqref{cherndivisor} yields
\begin{equation} \label{chernclassnu}
c_1(\nu_*(\sO_{\tilde D})) = D, \ c_2(\nu_*(\sO_{\tilde D}))= D^2 - [Z].
\end{equation}
Recall now that $c_1(\Omega_{\PP^n}) = (n+1) H, \ c_2(\Omega_{\PP^n}) = \frac{n(n+1)}{2} H^2$ where $H$ is the hyperplane class.
Then the exact sequence \eqref{definelog} combined with \eqref{chernclassnu} yields
$$
c_2(\Plog) = \left( \frac{(n+1) (n-2d)}{2}+d^2 \right) H^2 - [Z].
$$
Thus if we twist by $\sO_{\PP^n}(m)$ we obtain that
\begin{multline} \label{twistm}
c_2(\Plog \otimes \sO_{\PP^n}(m)) = \left( \frac{(n+1) (n-2d)}{2}+d^2 \right) H^2 - [Z]
\\ - (n-1) (n+1-d) m H^2 + \frac{n(n-1)}{2} m^2 H^2.
\end{multline}
For $m=1$ this formula simplifies to
\begin{equation} \label{twistone}
c_2(\Plog \otimes \sO_{\PP^n}(1)) = (d-1)^2 H^2 - [Z].
\end{equation}

\subsection{Global sections of $\Plogone$}
We now choose homogeneous
coordinates $X_0, \ldots, X_n$ on $\PP^n$.
Since $D \subset \PP^n$ is a prime divisor of degree $d \geq 2$, we have
$$
H^0(D, \sO_D(1)) =
\langle
X_0|_D, X_1|_D, \ldots, X_n|_D
\rangle,
$$
and, for simplicity's sake, we denote by $X_0|_D, X_1|_D, \ldots, X_n|_D$ also their images in 
$H^0(D,  \nu_*(\sO_{\tilde D}))$ under the natural inclusion $H^0(D, \sO_D) \subset H^0(D,  \nu_*(\sO_{\tilde D}))$.
By Bott's theorem we have $H^1(\PP^n, \Omega_{\PP^n}(1))=0$, so the cohomology
sequence associated to the sequence \eqref{definelog} twisted by $\sO_{\PP^n}(1)$
shows that $X_0|_D, X_1|_D, \ldots, X_n|_D$ lift to global sections of $\Plogone$.
In fact if we denote by $f$ an irreducible homogeneous polynomial 
defining the hypersurface $D$, these global sections
can be written in homogeneous coordinates as
\begin{equation} \label{theforms}
\frac{d (X_0 \cdot f)}{f}, \frac{d (X_1 \cdot f)}{f}, \ldots, \frac{d (X_n \cdot f)}{f}.
\end{equation}

The following elementary lemma is fundamental for our proof.

\begin{lemma} \label{lemmaglobalgeneration}
Under the Assumption \ref{assume}, let
$$
\alpha: \sO_{\PP^n}^{\oplus n+1} \rightarrow \Plogone
$$
be the morphism of sheaves defined by the global sections \eqref{theforms}. Then $\alpha$
is surjective on $\PP^n \setminus D_{\sing}$. If $x \in D_{sing}$ is a point such that in local analytic
coordinates $u_1, \ldots, u_n$ around $x$ the hypersurface $D$ is given by $u_1 \cdot u_2=0$, the linear map
$$
\alpha_x : (\sO_{\PP^n}^{\oplus n+1})_x \rightarrow (\Plogone)_x 
$$
has rank at least $n-1$.
\end{lemma}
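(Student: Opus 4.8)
The plan is to carry out everything in a single affine chart and to reduce both assertions to a computation in the logarithmic frame. Since the rank of $\alpha_x$ and the surjectivity of $\alpha$ are local, and since the sections \eqref{theforms} transform linearly among themselves under a linear change of the $X_i$, I may assume the point under consideration lies in $U_0 = \{X_0 \neq 0\}$, trivialise $\sO_{\PP^n}(1)$ by the section $X_0$, and work with the affine coordinates $x_i = X_i/X_0$ and the dehomogenised equation $\bar f = f(1, x_1, \dots, x_n)$ of $D$. Because $d\log X_0$ vanishes on $U_0$, the generating sections become, after this trivialisation, the logarithmic forms
$$
\tilde s_0 = \frac{d \bar f}{\bar f}, \qquad \tilde s_i = dx_i + x_i\, \frac{d \bar f}{\bar f} \quad (1 \le i \le n).
$$
The key observation is the identity $\tilde s_i - x_i\, \tilde s_0 = dx_i$, which is an honest $\sO_{U_0}$-linear combination. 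Hence the image subsheaf of $\alpha$ contains every $dx_i$ as well as $\tilde s_0$, so on $U_0$ it equals $\Omega_{\PP^n} + \sO_{U_0}\cdot \frac{d\bar f}{\bar f}$.

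From this the surjectivity on $\PP^n \setminus D_{\sing}$ is immediate. At a point $x \notin D$ the form $\frac{d\bar f}{\bar f}$ is regular and $\Plog$ agrees with $\Omega_{\PP^n}$; since the $dx_i$ already generate $\Omega_{\PP^n}$, the map $\alpha_x$ is surjective. At a smooth point $x$ of $D$ I choose analytic coordinates with $u_1 = \bar f$, so that $\Plog$ is locally free with frame $\frac{du_1}{u_1}, du_2, \dots, du_n$. In the fibre at $x$ the section $\frac{d\bar f}{\bar f} = \frac{du_1}{u_1}$ evaluates to the residue basis vector, the holomorphic form $du_1 = u_1\,\frac{du_1}{u_1}$ evaluates to $0$ because $u_1(x)=0$, and the $du_2, \dots, du_n$ give the remaining basis vectors; together these span the fibre, so $\alpha_x$ is again surjective.

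For the normal crossing point $x$ I write $\bar f = v\, u_1 u_2$ with $v$ a local unit, so that $\frac{d\bar f}{\bar f} = \frac{dv}{v} + \frac{du_1}{u_1} + \frac{du_2}{u_2}$, and I evaluate the $\tilde s_i$ in the frame $\frac{du_1}{u_1}, \frac{du_2}{u_2}, du_3, \dots, du_n$. Writing $dx_i = \sum_j c_{ij}\, du_j$ and $\frac{dv}{v} = \sum_j b_j\, du_j$ with holomorphic coefficients, the crucial point is that the holomorphic forms $du_1 = u_1\,\frac{du_1}{u_1}$ and $du_2 = u_2\,\frac{du_2}{u_2}$ contribute nothing to the $\frac{du_1}{u_1}$ and $\frac{du_2}{u_2}$ components at $x$, where $u_1 = u_2 = 0$. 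A direct evaluation then shows that every $\tilde s_i(x)$, with $\lambda_i := x_i(x)$ and $\lambda_0 := 1$, has the shape
$$
\tilde s_i(x) = \lambda_i\, \tfrac{du_1}{u_1} + \lambda_i\, \tfrac{du_2}{u_2} + \sum_{j \ge 3} a_{ij}\, du_j ,
$$
so all of them lie in the hyperplane where the first two coordinates coincide; in particular $\rk \alpha_x \le n-1$. For the reverse inequality I project away the $\frac{du_2}{u_2}$ coordinate (an isomorphism on that hyperplane) and subtract the multiple $\lambda_i\,\tilde s_0(x)$ from $\tilde s_i(x)$ for $1 \le i \le n$, reducing these to the vectors $(0, c_{i3}(x), \dots, c_{in}(x))$. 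Since $(c_{ij})$ is the Jacobian of the change of coordinates from $u$ to $x$, it is invertible at $x$, so its columns $3, \dots, n$ are linearly independent and these vectors span an $(n-2)$-dimensional space; adjoining the image of $\tilde s_0(x)$, whose first coordinate equals $1$, produces an $(n-1)$-dimensional span. Hence $\rk \alpha_x = n-1$, which gives the asserted lower bound.

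The main obstacle is the normal crossing computation of the last paragraph: one must keep careful track of which contributions survive in the logarithmic fibre — the holomorphic forms $du_1, du_2$ drop out along the residue directions precisely because $u_1, u_2$ vanish at $x$ — and then recognise the surviving $(n-2) \times n$ block of coefficients as a full-rank submatrix of the coordinate Jacobian. Everything else, namely the passage to the chart, the identity $\tilde s_i - x_i\,\tilde s_0 = dx_i$, and the two surjectivity cases, is then routine.
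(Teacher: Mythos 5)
Your proof is correct and takes essentially the same approach as the paper's: a direct computation in the affine chart with the sections written as in \eqref{affineexpression}, evaluated against the standard local frames of logarithmic differentials off $D$, at smooth points of $D$, and at normal crossing points. The differences are only organisational --- your $\sO$-linear reduction $\tilde s_i - x_i \tilde s_0 = dx_i$ replaces the paper's evaluation at the normalised point where all affine coordinates vanish, you handle the unit in $\bar f = v\,u_1 u_2$ explicitly and use invertibility of the coordinate Jacobian instead of the paper's choice of mixed coordinates $u_1, u_2, Y_3, \ldots, Y_n$, and in passing you obtain the slightly sharper conclusion that the rank at a normal crossing point is exactly $n-1$.
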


For the proof recall the well-known local description of logarithmic differentials in the points where $D$
is a normal crossings divisor: fix a point $x \in D$ and let $u_1, \ldots, u_n$ be holomorphic coordinates
in an analytic neighbourhood of $x$. If $D$ is given by $u_1=0$ in these coordinates (so $x \in D_{\nons}$), then
$\Omega_{\PP^n}(\log D)$ is locally generated by
$$
\frac{d u_1}{u_1}, d u_2, \ldots, d u_n.
$$
If $D$ is given by $u_1 \cdot u_2=0$ a set of local generators is
$$
\frac{d u_1}{u_1}, \frac{d u_2}{u_2}, d u_3, \ldots, d u_n.
$$

\begin{proof}[Proof of the first statement.]
We prove the statement for $x \in D \setminus D_{\sing}$, the (easier)
case $x \in \PP^n \setminus D$ is left to the reader. 
Up to linear coordinate change we can suppose
that $x=(1: 0: \ldots: 0)$.
The affine set 
$U_0 := \{
x \in \PP^n \ | \ x_0 \neq 0
\}$ 
is isomorphic to $\C^n$ under the isomorphism 
$$
(X_0: \ldots : X_n) \ \mapsto \ (\frac{X_1}{X_0}, \ldots,  \frac{X_n}{X_0})=(Y_1, \ldots, Y_n).
$$
In this affine chart the forms 
\eqref{theforms} can be written as
\begin{equation} \label{affineexpression}
\frac{df_b}{f_b}, \frac{Y_1 df_b}{f_b} +  d Y_1, \ldots, \frac{Y_n df_b}{f_b} +  d Y_n,
\end{equation}
where $f_b(Y_1, \ldots, Y_n):=f(1, Y_1, \ldots, Y_n)$ is the deshomogenisation of $f$.
Since $x \in D$ is a smooth point one of the partial derivatives
$\frac{\partial f_b}{\partial Y_i}(x)$ is non-zero, so up to renumbering the coordinates
$Y_1, \ldots Y_n$ we can suppose
that $\frac{\partial f_b}{\partial Y_1} (x) \neq 0$. 
Thus $f_b, Y_2, \ldots, Y_n$ form a set of holomorphic coordinates around $x$ and
$$
\frac{df_b}{f_b}, d Y_2, \ldots, d Y_{n}
$$
is a set of generators for $(\Plogone)|_{U_0}$ in a neighbourhood of $x$.
Yet in the point $x=(0, \ldots, 0)$ the global sections \eqref{affineexpression} are equal
to $\frac{df_b}{f_b}, d Y_1, \ldots, d Y_{n}$, so they contain this generating set.

{\em Proof of the second statement.} Up to linear coordinate change we can suppose
that $x=(1: 0:\ldots: 0)$ and as before we consider the affine chart $U_0 \simeq \C^n, Y_i = \frac{X_i}{X_0}$
and the expression $\eqref{affineexpression}$ of the global sections in these affine coordinates.
Up to renumbering we can suppose that $u_1, u_2, Y_3, \ldots, Y_n$ are coordinates
in an analytic neighbourhood of $(0, \ldots, 0) \in \C^n$. Thus $(\Plogone)|_{U_0}$
is generated in a neighbourhood of the origin by
$$
\frac{d u_1}{u_1}, \frac{d u_2}{u_2}, d Y_3, \ldots, d Y_n.
$$
The logarithmic forms $\frac{Y_i df_b}{f_b} +  d Y_i$ are equal to $d Y_i$ in the origin,
so they
generate the subspace 
$$
\langle d Y_3, \ldots, d Y_n \rangle \subset (\Plogone)_x.
$$
In the coordinates $u_1, u_2, Y_3, \ldots, Y_n$ the polynomial $f_b$ is equivalent to $u_1 \cdot u_2$,
and
$$
\frac{d (u_1 \cdot u_2) }{u_1 u_2} = \frac{d u_1}{u_1} + \frac{d u_2}{u_2} 
$$
is a non-zero element of $(\Plogone)_x$
which is not in the $(n-2)$-dimensional subspace $\langle d Y_3, \ldots, d Y_n \rangle$. Thus the global sections generate
a subspace of dimension at least $n-1$.
\end{proof}

\section{Proof of the main theorem}

The proof of Beauville's result \cite[Thm.]{Bea01} on endomorphisms of smooth hypersurfaces
$D \subset \PP^n$ is based on the fact that a global section of $\Omega_{X}(2)$ with isolated
zeroes maps under the tangent map to a global section of $\Omega_{X}(2m)$
which still has isolated zeroes \cite[Lemma 1.1]{ARV99}.
The following technical statement gives an analogue for our setting:

\begin{lemma} \label{lemmasurfaceinequality}
Let $S$ be a smooth projective surface, and let $E_1$ be a vector bundle on $S$ of rank $n \geq 2$.
Suppose that there exists a linear subspace $V \subset H^0(S, E_1)$ such that
$\dim V>\rk E_1$ and the evaluation morphism
$$
ev: V \otimes \sO_S \rightarrow E_1
$$
is surjective in the complement of a finite set $Z_S \subset S$. Suppose also that for every 
point $x \in Z_S$ the linear map
$$
ev_x : (V \otimes \sO_S)_x \rightarrow E_{1, x}
$$
has rank at least $n-1$. 

Suppose that there exists a vector bundle $E_2$ on $S$ of rank $n$ and an injective morphism of sheaves
$$
\varphi: E_1 \rightarrow E_2
$$
such that the following holds:
\begin{enumerate}
\item[(a)] The linear map $\varphi_x: E_{1,x} \rightarrow E_{2,x}$ has rank at least $n-2$ in every point $x \in S$. The set
$B_S$ where $\rk(\varphi_x)=n-2$ is finite.
\item[(b)] Denote by $R_S \subset S$ the closed set such that $\rk(\varphi_x)<n$. Then $R_S$
is disjoint from $Z_S$.
\end{enumerate}
Then we have $c_2(E_1) \leq c_2(E_2)$.
\end{lemma}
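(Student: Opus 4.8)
The plan is to realise both second Chern classes as the lengths of explicit zero--dimensional degeneracy loci cut out by $n-1$ general sections, and then to compare these two loci through the factorisation by $\varphi$. The tool is the standard degeneracy--class (Thom--Porteous) computation: if $\beta:\sO_S^{\oplus(n-1)}\to E$ is a morphism to a rank $n$ bundle $E$ on the surface $S$ whose locus $D_{n-2}(\beta)=\{x:\rk\beta_x\le n-2\}$ has the expected codimension two and satisfies $D_{n-3}(\beta)=\emptyset$, then $D_{n-2}(\beta)$ is a Cohen--Macaulay zero--dimensional scheme whose length equals $c_2(E)$. I would apply this twice: to a general $\sigma:\sO_S^{\oplus(n-1)}\to E_1$ coming from $V$, and to the composite $\tau:=\varphi\circ\sigma:\sO_S^{\oplus(n-1)}\to E_2$.

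First I would choose the $n-1$ sections generically inside $V$. Since $ev$ is surjective off the finite set $Z_S$ and of rank at least $n-1$ on $Z_S$, the evaluation $ev$ has rank $\ge n-1$ at \emph{every} point of $S$. For a general $(n-1)$--dimensional subspace $W\subset V$ the induced map $\sigma=ev|_W$ then satisfies $D_{n-3}(\sigma)=\emptyset$ (its expected codimension being $6$, so it is empty for general $W$), while $D_{n-2}(\sigma)$ is finite of the expected codimension two; the hypothesis $\rk(ev_x)\ge n-1$ is exactly what keeps $D_{n-2}(\sigma)$ of codimension two rather than acquiring a divisorial component. Hence $\operatorname{length}D_{n-2}(\sigma)=c_2(E_1)$.

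The main obstacle is to show that $\tau=\varphi\circ\sigma$ \emph{still} has a zero--dimensional degeneracy locus, even though $\varphi$ drops rank along the whole curve $R_S$. At a point $x\notin R_S$ the map $\varphi_x$ is an isomorphism, so $\tau_x$ and $\sigma_x$ have the same rank and nothing new occurs. Along $R_S$ I would argue using hypotheses (a) and (b): by (b) the curve $R_S$ is disjoint from $Z_S$, so $ev$ is surjective there and $\im(\sigma_x)=ev_x(W)$ is a general $(n-1)$--dimensional subspace of the \emph{full} fibre $E_{1,x}$; on the open part of $R_S$ where $\rk\varphi_x=n-1$ the kernel $\ker\varphi_x$ is a line, and a general $(n-1)$--plane meets a fixed line in dimension $(n-1)+1-n=0$. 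Thus for general $W$ the subspace $\im(\sigma_x)$ avoids $\ker\varphi_x$ at all but finitely many points of $R_S$, so there $\tau_x=\varphi_x(\im\sigma_x)$ retains rank $n-1$. The finitely many points $B_S$ where $\rk\varphi_x=n-2$ (finite by (a)) are harmless, since $\im(\sigma_x)$ meets the $2$--plane $\ker\varphi_x$ in dimension $1$ and $\tau_x$ keeps rank exactly $n-2$. Combining the cases, for general $W$ the locus $D_{n-2}(\tau)$ is finite and $D_{n-3}(\tau)=\emptyset$, whence $\operatorname{length}D_{n-2}(\tau)=c_2(E_2)$ by the same degeneracy--class computation.

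Finally I would compare the two schemes. Writing the sections locally as matrices $M_\sigma$ (size $n\times(n-1)$) and $M_\varphi$ (size $n\times n$), we have $M_\tau=M_\varphi M_\sigma$, and by the Cauchy--Binet formula every maximal minor of $M_\tau$ is an $\sO_S$--linear combination of the maximal minors of $M_\sigma$. Hence the Fitting ideal defining $D_{n-2}(\tau)$ is contained in that defining $D_{n-2}(\sigma)$, giving a scheme--theoretic inclusion $D_{n-2}(\sigma)\subseteq D_{n-2}(\tau)$ of finite schemes. Taking lengths yields
\[
c_2(E_1)=\operatorname{length}D_{n-2}(\sigma)\le\operatorname{length}D_{n-2}(\tau)=c_2(E_2),
\]
as desired. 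The only remaining points needing care are the genericity of $W$ (finitely many open conditions, since $Z_S$ and $B_S$ are finite and $R_S$ is a curve) and the identification of the length of a codimension--two determinantal scheme with $c_2$, both of which are standard.
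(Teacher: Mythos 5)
Your proposal is correct, but it takes a genuinely different route from the paper's. The paper uses a \emph{single} general section $\sigma \in |V|$ and an induction on the rank $n$: it introduces the incidence variety $B=\{(x,\sigma) : \varphi(ev(\sigma(x)))=0\}$, bounds the fibres of $B\rightarrow S$ using exactly the rank hypotheses (a), (b) and the disjointness of $R_S$ and $Z_S$ to get $\dim B\le \dim V-n+1$, concludes that for $n>2$ a general $\sigma$ gives a nowhere-vanishing composite $\varphi\circ\sigma$, quotients both bundles by the resulting trivial subbundle (which preserves all hypotheses and both $c_2$'s), and finishes in the base case $n=2$ by comparing the finite zero scheme of $\sigma$ with that of $\varphi\circ\sigma$. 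You instead take $n-1$ general sections at once and compare the two determinantal schemes directly: Cauchy--Binet gives the containment of Fitting ideals, hence the scheme inclusion $D_{n-2}(\sigma)\subseteq D_{n-2}(\varphi\circ\sigma)$, and Thom--Porteous in the expected-codimension Cohen--Macaulay case (Fulton, \emph{Intersection Theory}, Thm.~14.4(c), with Eagon--Northcott supplying the CM scheme structure) identifies the two lengths with $c_2(E_1)$ and $c_2(E_2)$. Your route buys a one-shot argument with no induction and makes the mechanism of the inequality (nested determinantal ideals) completely explicit; the paper's route buys elementarity, needing nothing beyond the fact that a section of a rank-two bundle with finite zero scheme computes $c_2$, and it packages all genericity into the single dimension count for $B$. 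The one place your write-up needs tightening is the closing claim that the genericity of $W$ amounts to ``finitely many open conditions'': along the curve $R_S$ this is a one-parameter family of conditions, not finitely many, and the correct justification is an incidence-variety count over $R_S$ --- for $x\in R_S\setminus B_S$ the condition $\ker\varphi_x\cap \im\sigma_x\neq 0$ is, via $ev_x$, a codimension-one Schubert condition on $W$ in $\mathrm{Gr}(n-1,V)$, so the incidence set has dimension $\dim \mathrm{Gr}(n-1,V)$ and a general $W$ meets it in only finitely many $x$ (and similarly codimension $2$, resp.\ $\geq 4$, counts show $D_{n-2}(\sigma)\cap R_S=\emptyset$ and $D_{n-3}(\varphi\circ\sigma)=\emptyset$). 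This is precisely the style of argument the paper makes explicit with its variety $B$; with that bookkeeping spelled out, your proof is complete.
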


\begin{proof} Denote by $|V|$ the projective space associated to the vector space $V$.
Consider the projective set
$$
B := \{
(x, \sigma) \in X \times |V| \ | \ \varphi(ev(\sigma(x)))=0
\},
$$
and denote by $\holom{p_1}{B}{X}$ and \holom{p_2}{B}{|V|} the natural projections.
If $x \in B_S \subset R_S$, then $x \not\in Z_S$ by hypothesis $(b)$. Thus $(\varphi \circ ev)_x$ 
has rank $n-2$ and $\dim \fibre{p_1}{x} = \dim V-n+1$.
Analogously if $x \in R_S \setminus B_S$ (resp. $x \in Z_S$), then $\dim \fibre{p_1}{x} = \dim V-n$.
Finally for $x \in S \setminus (R_S \cup Z_S)$ we obviously have $\dim \fibre{p_1}{x} = \dim V-n-1$.
Thus we see that all the irreducible components of $B$ have dimension at most $\dim V-n+1$.

We will now argue by induction on the rank $n$. 

{\em Start of the induction: $n=2$.} Then all the irreducible components have dimension at most $\dim V-1=\dim |V|$,
so the general fibre of $p_2$ is finite or empty. Hence for a general $\sigma \in |V|$,
we have an induced section 
$$
\sO_S \stackrel{\sigma}{\rightarrow} E_1 \stackrel{\varphi}{\rightarrow} E_2
$$
of $E_2$ which vanishes at most in finitely many points (so it computes $c_2(E_2)$). In particular 
the section $\sO_S \stackrel{\sigma}{\rightarrow} E_1$ vanishes at most in finitely many points and clearly $c_2(E_1) \leq c_2(E_2)$.

{\em Induction step: $n>2$.} In this case all the irreducible components have dimension at most $\dim V-1 < \dim |V|$,
so the general $p_2$-fibre is empty. Thus a general $\sigma \in |V|$ defines a morphism
$$
\sO_S \stackrel{\sigma}{\rightarrow} E_1 \stackrel{\varphi}{\rightarrow} E_2
$$
that does not vanish, hence it defines a trivial subbundle of both $E_2$ and $E_1$. In particular
the quotients $E_2/\sO_S$ and $E_1/\sO_S$ are locally free and it is easy to check that
the space of global sections $V/\C \sigma$ and the induced map $\bar \varphi: E_1/\sO_S \rightarrow E_2/\sO_S$
still satisfy the conditions of the lemma. Since
$c_2(E_i)=c_2(E_i/\sO_S)$ we can conclude.
\end{proof}

\begin{proof}[Proof of Theorem \ref{theoremmain}]
Since $\PP^n$ has Picard number one, the endomorphism $f$ is polarised, i.e. we have
$f^* H \equiv mH$ for some $m \in \N$ and $H$ the hyperplane class.
Since $D$ is totally invariant, we know by \cite[Cor.3.3]{a20} (cf. also \cite[Prop.2.4]{HN11}) 
that the pair $(\PP^n, D)$ is log-canonical. Since $D$ is Cohen-Macaulay its non-normal locus
$Z$ has pure dimension $n-2$ and every irreducible component of $Z$ is an lc centre
of the pair $(X, D)$. Thus we know by \cite[Cor.3.3]{a20} that (up to replacing $f$ by some
iterate $f^l$) every irreducible component of $Z$ is totally invariant and 
not contained in the logarithmic
ramification divisor $R$. Since $D$ is totally invariant for any iterate $f^l$, we can suppose from now
on that these properties hold for $f$.

Since the pair $(X, D)$ is log-canonical there exists 
a subset $W \subset \PP^n$ of codimension at least three such that
$D \setminus W$ has at most normal crossing singularities. Thus we can use the logarithmic
cotangent sheaf $\Plog$ introduced in Section \ref{sectionlogdiff}. Since $D$ is a totally invariant
divisor, the tangent map
$$
df : f^* \Omega_{\PP^n} \rightarrow \Omega_{\PP^n}
$$
induces an injective morphism of sheaves 
$$
df_{\log}: f^* \Plog \rightarrow \Plog.
$$
Let $P \subset \PP^n$ be a general $2$-dimensional linear subspace, and $S: = \fibre{f}{P}$
its preimage. Then $S$ is a smooth surface, and we claim that
$$
\varphi : f^*(\Plog \otimes \sO_{\PP^n}(1)) \otimes \sO_S \rightarrow 
\Plog \otimes \sO_{\PP^n}(m) \otimes \sO_S
$$
satisfies the conditions of Lemma \ref{lemmasurfaceinequality}.

{\em Proof of the claim.}
Consider the $n+1$-dimensional subspace $V \subset H^0(\PP^n, \Plog \otimes \sO_{\PP^n}(1))$
defined by the global sections \eqref{theforms}.
By Lemma \ref{lemmaglobalgeneration} the evaluation morphism is surjective in the complement of the singular
locus $D_{\sing}$, and if $x \in Z$ is a general point, it has rank at least $n-1$. Since $P$ is general of dimension two, the intersection $P \cap D_{\sing}$ consists only of general points of $Z$, so if we denote by
$$
ev_S : f^* (V \otimes \sO_{\PP^n}) \otimes \sO_S \rightarrow f^* (\Plog \otimes \sO_{\PP^n}(1)) \otimes \sO_S
$$
the restriction of the (pull-back of the) evaluation morphism to $S$ it is surjective in the complement
of the finite set $Z_S := \fibre{f}{P \cap Z}$ and has rank at least $n-1$ in the points of $Z_S$.
Since $Z$ is totally invariant, the finite set $Z_S$ is contained in $Z \cap S$. Since $Z$ is not contained
in the logarithmic ramification divisor $R$ and $P$ is general, the intersection $Z \cap R \cap S$ 
is empty. This shows that the sets $R_S := R \cap S$ and $Z_S$ are disjoint.

Thus we are left to show that $\rk \varphi_x \geq n-2$ for every $x \in S$ and the set $B_S$ where
equality holds is finite. For the tangent map $df$ this is well-known: if $W \subset \PP^n$ is 
a variety of dimension $d$ and $x \in W$ is a general point, the finite map $W \rightarrow f(W)$ 
is \'etale in $x$, in particular the tangent map $df$ has rank at least $\dim W$ in $x$.
This shows that the sets 
$$
\{
x \in \PP^n  \ | \ \rk \ df_x \leq n-k
\}
$$ 
have codimension at least $k$ in $\PP^n$. Since $\Omega_{\PP^n}$ and $\Plog$ identify in the complement
of $D$ we are thus left to consider points of $D$. Yet if
$x \in D_{\nons}$ (resp. $x \in Z$ general) the vector space $\Plog_x$ contains a 
linear subspace that is naturally isomorphic to $\Omega_{D,x}$
(resp. $\Omega_{Z,x}$), so we can reduce to the case of the tangent map of $f|_D$ (resp. $f|_Z$).
This proves the claim.

We can now finish the proof by comparing the Chern classes.
Since $f^* H \equiv m H$ we have $[S] = m^{n-2} H^{n-2}$ and $f^* [Z] = m^2 (\deg Z) H^2$. Thus it follows from \eqref{twistone} that
$$
c_2(f^* (\Plog \otimes \sO_{\PP^n}(1)) \otimes \sO_S) = ((d-1)^2 - \deg Z) m^n.
$$
By \eqref{twistm} and Lemma \ref{lemmasurfaceinequality} this is less or equal than
\begin{multline}
c_2(\Plog \otimes \sO_{\PP^n}(m) \otimes \sO_S) = \left(\frac{(n+1) (n-2d)}{2}+d^2 - \deg Z \right) m^{n-2} 
\\ - (n-1) (n+1-d) m^{n-1} + \frac{n(n-1)}{2} m^n.
\end{multline}
Since we can replace $f$ by some iterate the inequality holds for all sufficiently
divisible $m \in \N$. Thus by considering only the terms of order $m^n$ we obtain
\begin{equation} \label{almostdone}
(d-1)^2 - \deg Z \leq \frac{n(n-1)}{2}.
\end{equation}
This inequality is always strict since otherwise we obtain
$$
0 \leq \left(\frac{(n+1) (n-2d)}{2}+d^2 - \deg Z \right) m^{n-2} 
- (n-1) (n+1-d) m^{n-1}
$$
for all sufficiently divisible $m \in \N$. Now recall that $d \leq n+1$ and $d=n+1$ is excluded since we suppose that $D$ is a prime divisor \cite[Thm.2.1]{HN11}. Hence we have $-(n-1)(n+1-d)<0$ which yields a contradiction. Thus 
the strict form of \eqref{almostdone} holds, this is equivalent to our statement.
\end{proof}


\def\cprime{$'$}

\end{document}